\newcommand{\Q}{\mathbb{Q}}                 
\newcommand{\N}{\Bbb N}                 
\newcommand{\Z}{\Bbb Z}                 
\newcommand{\R}{\Bbb R}                 
\newcommand{\C}{\Bbb C}                 
\newcommand{\F}{\Bbb F}                  
\newcommand{\x}{\vec{x}}
\newcommand{\y}{\vec{y}}
\renewcommand{\(}{\left(}
\renewcommand{\)}{\right)}
\newcommand{\Cls}{\mathrm{Cls}}
\newcommand{\Gen}{\mathrm{Gen}}
\newcommand{\Aut}{\mathrm{Aut}}
\newcommand{\Mass}{\mathrm{Mass}}
\newcommand{\Set}{\Bbb S}                  
\newcommand{\al}{\alpha}
\newcommand{\ra}{\rightarrow}
\newcommand{\ve}{\varepsilon}
\newtheorem{thm}{Theorem}[section]
\newtheorem{lem}[thm]{Lemma}
\newtheorem{defn}[thm]{Definition}
\theoremstyle{remark}
\newtheorem{rem}[thm]{Remark}
\newcommand{\ord}{\text{ord}}
\newcommand{\Hyp}{\mathbb{H}}
\newcommand{\T}{\mathbb{T}}
\newcommand{\leg}[2]{\(\frac{#1}{#2}\)} 
\newcommand{\Cond}{\text{Cond}}
\title{Enumerating maximal definite quadratic forms \\ of bounded class number over $\Z$ in $n\geq 3$ variables}
\author{Jonathan Hanke}
\begin{document}
\maketitle

\begin{abstract}
In this paper we give an algorithm for enumerating all primitive (positive) definite maximal $\Z$-valued quadratic 
forms  $Q$ 
in $n\geq 3$ variables 
with bounded class number $h(Q)\leq B$.
We do this by analyzing the exact mass formula \cite{GHY}, and bounding all relevant local invariants to give only finitely many possibilities.
We also briefly describe an open-source implementation of this algorithm we have written in Python/Sage \cite{sage-4.6.2}  which explicitly enumerates all such 
quadratic forms of bounded class number in $n\geq 3$ variables. 
Using this we determine that there are exactly 115 primitive positive definite maximal $\Z$-valued quadratic forms in $n\geq 3$ variables of class number one, and produce a list of them.

%
In a future paper  we will complete this chain of ideas by extending these algorithms to allow the enumeration of all primitive maximal totally definite $O_F$-valued quadratic lattices of rank $n \geq 3$, where $O_F$ is the ring of integers of any totally real number field $F$.
\end{abstract}

\section{Brief History and Context}\label{Sec:Brief_History}


Given a positive definite $\Z$-valued quadratic form $Q(\x)$ in $n$ variables, one is often interested in understanding which numbers $m$ are {\bf represented} in the form $Q(\x) = m$ for some $\x \in \Z^n$.  The numbers $m$ represented by $Q$ are invariant under any invertible linear change of variables of $Q$ with coefficients in $\Z$, and we say that any two quadratic forms that are equivalent in this way are in the same {\bf class} $\Cls(Q)$.  Similarly, we can define the {\bf genus} $\Gen(Q)$ as the set of quadratic forms that are equivalent to $Q$ (under invertible linear change of variables) over $\R$ and over $\Z/M\Z$ for every $M \in \N$.   
Due to the relatively simple nature of quadratic forms over $\R$ and $\Z/M\Z$, an important measure of the complexity of a quadratic form $Q$ is given by its {\bf class number} $h(Q)$, which is defined to be the number of classes in its genus.  An important result of Siegel shows that this class number $h(Q)$ is always finite.


\smallskip

The question of enumerating quadratic forms of small class number goes back to Euler who around 1778 used positive definite binary quadratic forms (i.e. $n=2$) of class number one for primality testing.  (The determinants of such forms are called {\bf idoneal numbers}.)   
From the perspective of algebraic number theory, it is more natural to consider the number of {\bf proper classes} $h(\Delta_Q)$ of binary quadratic forms $Q(x,y) := ax^2 + bxy + cy^2$ of a given discriminant $\Delta_Q := b^2 - 4ac$, where we say that two quadratic forms are in the same proper class (denoted $\Cls^+(Q)$) if they are are equivalent by some linear change of variables in $\Z$ of determinant one.  The connection between binary quadratic forms and quadratic fields was given by Dedekind (see \cite[Ch XIII, \textsection 5]{Cohn1980}),
%
%
who showed that
%
there is a bijective correspondence between ideal classes of the imaginary quadratic field $\Q(\sqrt{\Delta})$ and the proper classes of primitive positive definite binary quadratic forms $Q(x,y)$ of (fundamental) discriminant $\Delta_Q = \Delta < 0$.  We denote this common {\bf (field) class number} by $h(\Delta)$.  
This correspondence also can be extended to quadratic orders of non-fundamental discriminant.
%
%
%
In the language of quadratic forms, it was conjectured by Gauss \cite{Gauss:1986uq} that for imaginary quadratic fields 
the class number $h(\Delta) \ra \infty$ as $\Delta \ra -\infty$.
This conjecture was proved by Siegel \cite{Siegel:1935fk} in 1935, though the lower bound for $h(\Delta)$ given there is ineffective due to the possibility of a ``Siegel zero'' for some quadratic Dirichlet $L$-function.  Only much later, in the 1980's, 
was an effective algorithm was established for provably enumerating all fundamental discriminants $\Delta < 0$ with 
bounded class number $h(\Delta) \leq B$
%
by using weaker effective lower bounds coming from the $L$-function of a rank $\geq3$ elliptic curve (see \cite{Goldfeld:1985zr}).  Unfortunately this idea is not enough to give an effective lower bound on the possibly smaller class numbers $h(Q)$ of definite binary quadratic forms (counting classes within a single genus $\Gen(Q)$), and this question is presently unresolved.

%

\smallskip

Aside from their historical significance, quadratic forms of small class number also play an important role in understanding what numbers $m$ are represented by a given quadratic form.  It is known that if $m$ is represented by $Q$ over $\R$ and over $\Z/M\Z$ for all $M$ (i.e. $m$ is {\bf locally represented} by $Q$), then $m$ is represented by some form $Q' \in \Gen(Q)$.  When $h(Q)=1$ this means that $m$ is represented by $Q$, so we know exactly what numbers $Q$ represents.  By using the Siegel-Weil formula when $h(Q)=1$ one can also give a quantitative version of this statement that produces simple formulas for the exact number of representations of $m$ in terms of divisor sums and class numbers of quadratic fields (see e.g. \cite[\textsection2.5]{Hanke:AWS2009}).  The prototypical example of this kind of formula is Jacobi's theorem for the sum of 4 squares, which states that for $m \in \Z >0$ we have the exact formula 
$$
r_4(m) = 8 \sum_{0 < d \mid m, 4\nmid d} d
$$
for the number of ways $r_4(m)$ of representing $m$ as $x^2 + y^2 + z^2 + w^2$.

The enumeration of forms of small class number is particularly important for understanding representation questions for definite ternary quadratic forms (i.e. $n=3$), since here there is no unconditional effective analytic method that allows one to understand what numbers are represented.  The list of positive definite  ternary forms with $h(Q)=1$, and a generalization of them called ``regular'' forms \cite{Jagy:1997aa}, were used by Bhargava to prove Conway's 15-Theorem \cite{Bhargava:2000kx}.  Also \cite{Ono:1997nz} used the fact that Ramanujan's ternary quadratic form $Q(\x) = x^2 + y^2 + 10z^2$ has class number $h(Q) = 2$ to give an effective lower bound on the numbers that it fails to represent over $\Z$ but represents locally, conditional upon several versions of the Riemann hypothesis.

\smallskip

There have been several efforts to understand and describe the definite quadratic forms of small class number, especially those where $h(Q)=1$, by various approaches.  
By analytic techniques, in 1937 Magnus \cite{Magnus:1937ly} showed that $h(Q) = 1 \implies n < 35$.   By similar analyses Pfeuffer \cite{Pfeuffer:1979bh, Pfeuffer:1971pd} was able to show that there are only finitely many $O_F$-valued totally definite quadratic forms $Q$ over totally real number fields $F$ with bounded class number $h(Q)$.

The most comprehensive results about forms of small class number were given in a series of papers by Watson \cite{Watson:1962aa}--\cite{Watson:1984aa}, where he shows that $h(Q) = 1 \implies n \leq 10$ and then proceeds with an elaborate case-by-case analysis to enumerate many such forms in each dimension by  geometry of numbers style arguments (i.e. analyzing the numbers represented by these forms, and relating these numbers to discriminants).  While he did not finish his enumeration before passing away, he was able to enumerate all class number one forms when $n=3, 7, 8, 9, 10$, and under additional assumptions he also has results for $4\leq n \leq 6$.
%

By another method, studying indecomposable lattices, Gerstein \cite{Gerstein:1972ya, Gerstein:1973wb} was able to construct distinct classes of lattices in a genus and gave the bound $h(Q) \geq p(t)$ (where $p(t)$ is the partition function) if $n \geq 16t+5$ and $t \in \N$.  He also showed that $h(Q) = 1  \implies n \leq 10$ for totally definite forms over totally real number fields.
%
%



In the case of maximal quadratic forms (which we treat in this paper), it was recently determined by Shimura \cite[Thrm 6.4, p361]{Shimura:2006ac}, using quaternionic trace formula computations of \cite{Pizer:1973vn}, that there are 64 classes of primitive positive definite maximal $\Z$-valued quadratic forms in $n=3$ variables.

%

%




%


\section{Results and Strategy} \label{Sec:Strategy}

In this paper we study the class number $h(Q)$ of a definite quadratic form $Q$ by studying a related quantity $\Mass(Q)$ called the {\bf mass} of $Q$.  The mass is a rational number depending only on the genus $\Gen(Q)$, defined as
$$
\Mass(Q):= \sum_{\Cls(Q') \in \Gen(Q)}\frac{1}{|\Aut(Q')|}.
$$
As a consequence of knowing the Tamagawa number of the special orthogonal group $\tau(O^+(Q)) = 2$, the mass also admits a general description as an infinite product 
$$
\Mass(Q) = 2 \prod_v \beta_v(Q)^{-1}
$$
of {\bf local densities} $\beta_v(Q)$ over all places $v$ of $\Q$, 
though the individual local densities may be difficult to compute at primes $p \mid 2\det(Q)$. 
The mass is related to the class number by the formula
$$
\Mass(Q):= \sum_{\Cls(Q') \in \Gen(Q)}\frac{1}{|\Aut(Q')|} 
 \leq \frac{h(Q)}{2} 
$$
and when $n\geq 2$ one can show that $\Mass(Q) \ra \infty$ as either $n\ra \infty$ or $\det(Q) \ra \infty$, which 
shows that there are at most finitely many quadratic forms with bounded class number $h(Q) \leq B$.

In this paper we give an algorithm for enumerating all primitive positive definite $\Z$-valued maximal quadratic lattices of bounded class number, based on the exact mass formula in \cite{GHY} for maximal quadratic lattices.  
To do this we determine all quadratic spaces that could potentially support a quadratic lattice of bounded class number $h(Q) \leq B$ based on the fact that we can bound the set of primes on which such a quadratic space could have ``non-generic'' local invariants.  We then construct all quadratic spaces that could support a maximal quadratic lattice $L$ with $\Mass(L) \leq \frac{B}{2}$.   Finally, we construct a maximal lattice $L_{max}$ in this quadratic space, and taking different bases for this lattice gives a class of maximal quadratic forms $\Cls(Q)$.
This algorithm (and supporting algorithms for maximal lattices described in \cite{Hanke:fk}) has been implemented in about 10,000 lines of Python/Sage code \cite{Hanke_maximal_mass_code:kx, Hanke:uq}.   We use this code to give a complete enumeration of classes of primitive positive definite $\Z$-valued maximal quadratic forms having class number one in $n\geq 3$ variables.
%

These results are very computational in nature, and it bears mentioning that in addition to the main mathematical analysis of mass factors and their growth, much of the present work depends on having reasonably efficient implementations of basic operations of quadratic spaces and quadratic lattices over $\Q$, its completions $\Q_v$, their rings of integers $\Z_v$, and finite fields $\F_p$ (including $\F_2$ of characteristic 2).  The details of these algorithms are discussed in \cite{Hanke:uq} and \cite{Hanke:fk}.
Developing and debugging these tools takes a great deal of time, and their algorithmic development and  implementation in an open-source software environment (SAGE) is one of the fundamental contributions of this work.  This work also builds on previous quadratic forms code \cite{Ha-Sage-QF-class} developed by the author for the proof of the 290-Theorem, which are presently included in the Sage {\tt QuadraticForm()} Python class. These fundamental tools can be used by other researchers to perform computations and prove theorems about quadratic forms over $\Z$.


A notable omission is the case of binary quadratic forms of class number one (i.e. $n=2$).  This case is closely related to the question of enumerating all class number one imaginary quadratic number fields, but is actually much harder and goes under the name ``Euler's Idoneal Number problem".  (See \cite{Kani:ys} for a detailed discussion.)  In the language of quadratic fields, we must enumerate all imaginary quadratic number fields $\Q(\sqrt{-t})$ whose class group has exponent $\leq 2$.  
It is known that there are finitely many such fields, but the asymptotic lower bound for the mass is very weak here, and making it effective depends on knowing that there are no ``Siegel zeros" for any Dirichlet $L$-function $L(s, \chi_t)$.  (See e.g. \cite[p520]{Iwaniec:2004la}.)


\section{Notation and Basic Lemma}


We use the symbols $\N, \Z, \Q, \R, \C$ to denote the natural numbers (i.e. positive integers), integers, rational numbers, real numbers, and complex numbers, and denote an equivalence class of completions of $\Q$ by a {\bf place} $v$.  When $v$ is non-archimedean we often abuse notations and write $v=p$ where $p \in \N$ is the corresponding prime generating the maximal ideal of $\Z_p$.
Given a place $v$, we let $\Q_v$ denote the completion of $\Q$ at $v$, and let $\Z_v$ denote the ring of integers of $\Q_v$.  When $\Q_v = \R$ we adopt the convention that $\Z_v = \Q_v$.
We will call a number $t \in \Z \neq 0$ {\bf squarefree} if $\ord_p(t) \leq 1$ for every prime $p\mid t$.

We say that a {\bf (mod $M$) Dirichlet character} $\chi$ is a group homomorphism $\chi: (\Z/MZ)^\times \ra \C^\times$, extended to a function on $\Z/M\Z$ by taking non-units to zero.  We say that the {conductor} of Dirichlet character is the smallest modulus $\text{cond}(\chi) \in \N$ so that $\chi$ factors through a mod $f$ Dirichlet character.  If we have $M = \text{cond}(\chi)$ for some mod $M$ Dirichlet character, we say that $\chi$ is {\bf primitive}.  
For any $t \in \Z \neq 0$, we have an associated primitive Dirichlet character $\chi_t$, defined as the primitive character associated to the Dirichlet character $m \mapsto  \leg{-t}{m}$, defined by the Jacobi symbol $\leg{a}{b}$ for $b \in\N$.  
We let $\Gamma(s)$ denote the usual (analytically continued) Gamma function defined by $\Gamma(s) := \int_{\R>0} e^{-t} \cdot t^{s-1}\, dt$ when $s = x + iy \in \C$ and $x>0$.  Given a Dirichlet character $\chi$, we let $L(s, \chi) := \sum_{n \geq 1} \chi(n) n^{-s} $ denote the usual Dirichlet $L$-function.



Given a $\Z$-valued quadratic form $Q(\x) = \sum_{1\leq i \leq j \leq n} c_{ij}x_i x_j$ in $n$ variables, we define its $n \times n$ {\bf Hessian} matrix $H_Q := \frac{\partial}{\partial x_i} \frac{\partial}{\partial x_j} Q(\x)$, 
its associated {\bf Hessian bilinear form} by $B_H(\x, \y) := {}^t\x H_Q \y$, 
%
%
and its {\bf Hessian determinant} $\det_H(Q) := \det(H_Q)$ 
(which depends only on the class $\Cls(Q)$ of $Q$).
We say that a $\Z$-valued quadratic form $Q(\x)$ is {\bf primitive} if the only way of writing $Q(\x) = c \cdot Q'(\x)$ for some $c \in \Z$ and some $\Z$-valued quadratic form $Q'(\x)$ is when $c\Z = \Z$.

We define {\bf quadratic space} (over a field $K$) to be a pair $(V,Q)$ where $V$ is a finite dimensional $K$-vector space and $Q$ is a $K$-valued quadratic form on $V$, and define a {\bf quadratic lattice} (over $R$) as a finitely generated projective $R$-module $L \subset V$ with $\text{rank}_{R}(L) = \dim_K(V)$ for some quadratic space $(V,Q)$ over $K$ where $K$ is the fraction field of $R$ .  We refer to the set $Q(L)$ as the {\bf values} of $L$.  We will usually take $K = \Q$ or one of its completions $\Q_p$ or $\R$, and take the corresponding ring $R$ as the ring of integers $\Z, \Z_p,$ or $\R$.
We say the a $\Z$-valued quadratic lattice is a {\bf maximal lattice} if it is not properly contained in any other $\Z$-valued quadratic lattice in the same quadratic space.  We say that a quadratic form is a {\bf maximal form} if it arises as the function $Q(\x) = Q(\sum_{1\leq i \leq n} x_i \vec v_i)$ for some choice of (ordered) basis $\{\vec v_i\}$ for a maximal ($\Z$-valued) quadratic lattice $L$.

Given a quadratic space $(V, Q)$ over $\Q$, we say that in is {\bf isotropic} if there is some non-zero vector $\vec v \in V$ with $Q(\vec v) = 0$, and we say that $(V,Q)$ is {\bf anisotropic} otherwise.  Given any place $v$ of $\Q$ and $a,b \in \Q_v^\times$, we let $(a,b)_v \in \{\pm1\}$ denote the usual {\bf Hilbert symbol}, defined to be $1$ exactly when the quadratic form $ax^2 + by^2 - z^2$ is isotropic over $\Q_v$, and $-1$ otherwise.
For a quadratic space over $\Q_v$, we define its {\bf standard invariants}  as the triple $(n, d, c)$ defined by choosing an orthogonal basis of $V$, so that 
$Q(\x) = a_1 x_1^2 + \cdots + a_n x_n^2$, and then setting $n := dim_{\Q_v}(V)$, $d := (\prod_{1\leq i \leq n} a_i)\cdot (\Q_v^\times)^2$, and $c := \prod_{i<j} (a_i, a_j)_v \in \{\pm 1\}$.  We refer to $(n,d,c)$ respectively as the {\bf dimension}, (Gram) {\bf determinant squareclass}, and {\bf Hasse invariant} of the quadratic space.  If $(V, Q)$ is a quadratic space over $\R$, then we additionally define its {\bf signature} $\sigma := \#\{a_i > 0\} - \#\{a_i < 0\} \in \Z$.  We say that a quadratic space over $\R$ is (either positive or negative) {\bf definite} if its non-zero values are either positive or negative, but not both, and the value zero is attained only by the zero vector.











\smallskip

{\bf Acknowledgements:}  The author would like to warmly thank Robert Varley for his continuing interest in this work, and for his detailed comments.
This work was completed over many years at several different institutions, including Duke University, the Max Plank Institute (MPI) for Mathematics in Bonn, the University of Georgia (UGA), and the Mathematical Sciences Research Institute (MSRI).   The infrastructure \cite{Ha-Sage-QF-class} for computing with quadratic forms in SAGE was developed at Duke, MPI and UGA between Fall 2006 and March 2009.  The algorithms and infrastructure for computing with squareclasses, quadratic spaces, and quadratic lattices \cite{Hanke:fk, Hanke:uq, Hanke_maximal_mass_code:kx} was developed at UGA and MSRI between March 2009 and June 2011.  The final theoretical bounds and enumerative results were obtained between December 2010 and June 2011, and the author graciously thanks MSRI for their hospitality during their Spring 2011 semester in Arithmetic Statistics. 
This work was also partially supported by NSF Grant DMS-0603976.  






\section{Mass formulas for maximal lattices}\label{Sec:Mass_formula_for_maximal}
For enumerating the maximal lattices $L$ with $h(L) = 1$, we make use of the explicit mass formula  in \cite[Props 7.4 and 7.5, p121]{GHY} which gives a formula for the {\bf proper mass} 
\begin{equation} \label{Eq:GHY_mass_formula}
\Mass^+(Q):= \sum_{\Cls^+(Q') \in \Gen(Q)}\frac{1}{|\Aut^+(Q')|} 
\end{equation}
of a maximal lattice $L$ on a quadratic space over a totally real number field $F$.  
Taking $F=\Q$, this formula states that for $n\geq 3$ we have 
\begin{equation} \label{Eq:GHY_mass_formula}
\Mass^+(L) =
\begin{cases} 
\displaystyle
2^{\frac{3 - n}{2}}\prod_{k=1}^{\frac{n-1}{2}} |\zeta(1-2k)| 
    \prod_{p} \lambda_{\text{odd}, p} & \quad \text{if $n$ is odd,}\\
\displaystyle
2^{\frac{2 - n}{2}} |L(1-\tfrac{n}{2}, \chi)| \prod_{k=1}^{\frac{n-2}{2}} |\zeta(1-2k)| 
    \prod_{p} \lambda_{\text{even}, p} & \quad \text{if $n$ is even,}\\
\end{cases}
\end{equation}
where
the adjustment factors $\lambda_{\text{odd}, p}$ and $\lambda_{\text{even}, p}$ are given in the following tables, and (for any given $L$) are equal to one for all but finitely many primes $p$.






\begin{equation*} \label{Table:GHY_factors}
\begin{array}{ccc}

\bigskip
\begin{tabular}{|c||c|c|}
\multicolumn{3}{c}{\bf Table of $\lambda_{\text{odd},p}$} \\
\hline
{\multirow{2}{*}{$w$}} & {$\ord_p(\delta)$} & $\ord_p(\delta)$  \\
& even & odd \\
\hline
\hline
$1$ & $1 \vphantom{\frac{a^a}{b^b}}$ & \multirow{2}{*}{$\frac{p^\frac{n-1}{2}+ w}{2}$}   \\
\cline{1-2}
$-1$ 
& $\frac{p^{n-1}-1}{2(p+1)}$ 
\vphantom{$\frac{p^\frac{n}{2}}{p^\frac{n}{2}}$}
&   \\
\hline
\end{tabular}
\bigskip

&
\quad
&

\bigskip
\begin{tabular}{|c||c|c|c|}
\multicolumn{4}{c}{\bf Table of $\lambda_{\text{even},p}$} \\
\hline
{\multirow{2}{*}{$w$}} 
& {\multirow{2}{*}{$\ord_p(\delta)$ even}} 
& \multicolumn{2}{|c|}{$\ord_p(\delta)$ odd} \\
\cline{3-4}
& & $E_\delta$ unramified &  $E_\delta$ ramified \\
\hline
\hline
$1$ & $1$ & $1$ & {\multirow{2}{*}{$\dfrac{1}{2}$ \vphantom{\Huge $\int$} }} \\ 
\cline{1-3}
$-1$ 
& $\frac{(p^\frac{n-2}{2} - 1) (p^\frac{n}{2} - 1)}{2(p+1)}$ 
&  $\frac{(p^\frac{n-2}{2} + 1) (p^\frac{n}{2} + 1)}{2(p+1)}$ 
& 
\vphantom{$\frac{p^\frac{n}{2}}{p^\frac{n}{2}}$}
\\
\hline
\end{tabular}

\end{array}
\end{equation*}
For convenience, we define the following {\bf GHY Mass-type} labels for describing the possible local GHY Mass adjustment factors $\lambda_{\text{odd/even},p}$ above:

$$
\begin{array}{ccc}

\bigskip
\begin{tabular}{|c||c|c|}
\multicolumn{3}{c}{\bf Odd dim'l GHY Mass-types} \\
\hline
{\multirow{2}{*}{$w$}} & {$\ord_p(\delta)$} & $\ord_p(\delta)$  \\
& even & odd \\
\hline
\hline
$1$ & --  & II$+$   \\
\hline
$-1$ 
& I 
&   II$-$ \\
\hline
\end{tabular}
\bigskip

&
\quad
&

\bigskip
\begin{tabular}{|c||c|c|c|}
\multicolumn{4}{c}{\bf Even dim'l GHY Mass-types} \\
\hline
{\multirow{2}{*}{$w$}} 
& {\multirow{2}{*}{$\ord_p(\delta)$ even}} 
& \multicolumn{2}{|c|}{$\ord_p(\delta)$ odd} \\
\cline{3-4}
& & $E_\delta$ unramified &  $E_\delta$ ramified \\
\hline
\hline
$1$ & -- & -- & {\multirow{2}{*}{III}} \\
\cline{1-3}
$-1$ 
& I 
&  II  
& 
\\
\hline
\end{tabular}

\end{array}
$$

\begin{rem}
Since there is exactly one genus of maximal ($\Z$-valued) lattices in a given quadratic space $(V,Q)$ (see \cite[\S30.1, pp172-3]{Shimura:2010uq}), the proper mass can be thought of as being determined by the underlying quadratic space.  Therefore $\Mass^+(L)$  can be expressed in terms of the standard local invariants $(n, d_v, c_v)$ of the quadratic space. 
\end{rem}

One of our main goals is to explicitly enumerate the results of the following theorem:

\begin{thm}
There are only finitely many (classes of) maximal $\Z$-valued quadratic lattices in $n\geq 3$ variables with a priori bounded class number.
\end{thm}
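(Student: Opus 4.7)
The plan is to combine the inequality $\Mass^+(L)\le h(L)/2\le B/2$ with the explicit mass formula of the previous section to simultaneously bound (i) the dimension $n$, (ii) the set of ``non-generic'' primes $p$ at which $\lambda_{\mathrm{odd}/\mathrm{even},p}\neq 1$, and (iii) the residue characteristics of those primes. Once these three pieces of data are confined to finitely many choices, Hasse--Minkowski pins down the quadratic space $(V,Q)$, and the uniqueness of the genus of maximal $\Z$-lattices in a fixed quadratic space (quoted in the Remark above) reduces the enumeration to a finite list of maximal lattices, each of which has only finitely many classes in its genus.

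First, I would isolate the archimedean/global part $G_n$ of the mass formula, namely the product $2^{(3-n)/2}\prod_{k=1}^{(n-1)/2}|\zeta(1-2k)|$ (odd $n$), respectively $2^{(2-n)/2}|L(1-\tfrac n2,\chi)|\prod_{k=1}^{(n-2)/2}|\zeta(1-2k)|$ (even $n$). Via the functional equation $|\zeta(1-2k)|=2(2k-1)!\,(2\pi)^{-2k}\zeta(2k)$ one sees that $G_n$ grows super-exponentially with $n$. In the even-dimensional case the extra factor $|L(1-\tfrac n2,\chi)|$ with $n\ge 4$ is an absolute value of a generalized Bernoulli number $B_{n/2,\chi}/(n/2)$, and therefore admits an \emph{effective} unconditional lower bound that still lets $G_n\cdot|L(1-\tfrac n2,\chi)|\to\infty$ as $n\to\infty$. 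Combined with the trivial bound $\lambda_p\ge \tfrac12$ at all primes, this yields an explicit $N_0=N_0(B)$ with $n\le N_0$.

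Next, for each fixed $n\ge 3$, I would read off the tables defining $\lambda_{\mathrm{odd},p}$ and $\lambda_{\mathrm{even},p}$ and observe that in every non-generic GHY Mass-type (I, II$\pm$, III) one has a lower bound of the shape
\[
\lambda_p \;\ge\; c_n\,p^{\alpha_n}
\]
for some $c_n>0$ and $\alpha_n>0$ depending only on $n$ --- evident from expressions such as $\tfrac{p^{n-1}-1}{2(p+1)}$, $\tfrac{p^{(n-1)/2}+w}{2}$, and $\tfrac{(p^{(n-2)/2}\pm 1)(p^{n/2}\pm1)}{2(p+1)}$. Hence if $S(L)$ denotes the finite set of non-generic primes for $L$, then
\[
\tfrac{B}{2}\;\ge\;\Mass^+(L)\;\ge\;G_n\cdot\prod_{p\in S(L)} c_n\,p^{\alpha_n},
\]
which bounds both $\#S(L)$ and $\max_{p\in S(L)} p$ by quantities depending only on $n$ and $B$.

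Finally, for each of the finitely many quadruples $(n,S,\{\text{type}_p\}_{p\in S},\{\text{local squareclass/Hasse data}\}_{p\in S})$ thus surviving, Hasse--Minkowski produces at most one quadratic space $(V,Q)$ over $\Q$ (compatibly with the forced real signature, which is determined by the requirement that $Q$ be positive definite). In that space the Remark guarantees a unique genus of maximal $\Z$-lattices, and the assumption $h(L)\le B$ makes that genus contain at most $B$ classes. The main obstacle I expect is step (i) in the even-dimensional setting: one must check that the effective lower bound on $|L(1-\tfrac n2,\chi)|$ is strong enough to dominate any conspiracy in which the conductor of $\chi$ grows with the set $S$ of bad primes. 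This is precisely why the argument fails for $n=2$ (Euler's idoneal number problem) and must be carried out with some care for $n=4$, where the $L$-value is $L(-1,\chi)$ and one uses the explicit formula in terms of generalized Bernoulli numbers to make the bound unconditional.
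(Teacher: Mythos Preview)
Your overall architecture --- bound $n$, then bound the ``bad'' primes, then invoke Hasse--Minkowski and uniqueness of the maximal genus --- matches the paper's strategy. There is, however, a concrete error in step~2 that makes the argument as written incomplete.

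You assert that \emph{every} non-generic GHY Mass-type satisfies a lower bound $\lambda_p \ge c_n\,p^{\alpha_n}$ with $\alpha_n>0$. This is false for Mass-type~III (even $n$, $E_\delta$ ramified at $p$): there $\lambda_{\text{even},p}=\tfrac12$ identically, with no growth in $p$ whatsoever. Consequently your displayed inequality
\[
\tfrac{B}{2}\;\ge\;\Mass^+(L)\;\ge\;G_n\cdot\prod_{p\in S(L)} c_n\,p^{\alpha_n}
\]
does not bound $\#S(L)$: one could in principle have arbitrarily many type~III primes, each contributing a factor of $\tfrac12$, driving the right-hand side to zero.

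The paper resolves this precisely. The type~III primes are exactly the primes dividing $\Cond(\chi)$, so if there are $t$ of them the total type~III contribution is $2^{-t}$. Rather than trying to bound $t$ directly, the paper groups this factor with the $L$-value and rewrites the even-dimensional mass as
\[
\Mass^+(L)\;=\;2^{\frac{2-n}{2}}\cdot\frac{|L(1-\tfrac n2,\chi)|}{2^{t}}\cdot\prod_{k=1}^{\frac{n-2}{2}}|\zeta(1-2k)|\cdot\prod_{p\nmid\Cond(\chi)}\lambda_{\text{even},p}\,.
\]
Lemma~\ref{Lemma:bounding_twists} then shows, via the functional equation, that $|L(1-\tfrac n2,\chi)|/2^{t}$ is bounded above only for finitely many conductors $q$ (the point being that $|L(1-r,\chi)|$ grows like $q^{r-1/2}$, which for $r\ge 2$ beats $2^{t}\le 2^{\omega(q)}$). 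Once $\chi$ is pinned down, the remaining $\lambda$-factors over $p\nmid\Cond(\chi)$ genuinely do satisfy the polynomial growth you describe, and your step~2 goes through for them.

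You do flag a concern at the end about the conductor of $\chi$ ``conspiring'' with the bad primes, which is exactly the right instinct --- but you locate it in step~(i) (bounding $n$) rather than in step~(ii), and you don't supply the mechanism that resolves it. The difficulty is not in bounding the dimension; it is that for each fixed even $n\ge 4$ the type~III factors alone give no control, and one must trade them against the $L$-value growth in the conductor. (Two minor side remarks: the correct inequality is $\Mass^+(L)\le h(L)$, not $h(L)/2$, since $\Mass^+(Q)=2\Mass(Q)$ and $\Mass(Q)\le h(Q)/2$; and your expressions ``$\tfrac{p^{(n-1)/2}+w}{2}$'' correspond to types~II$\pm$, which belong to the \emph{odd}-$\delta$ column and hence are orthogonal to the type~III issue.)
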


\begin{proof}
When $n$ is odd, there are only finitely many primes $p$ where $0 < \lambda_{\text{odd}, p} < B'$ for any $B'$.  Therefore there are only finitely many products $\prod_p \lambda_{\text{odd}, p}$ and finitely many masses $\Mass^+(L)$ below any given bound. 
This proves the claim when $n$ is odd since $\Mass^+(L) \leq h(L)$.

When $n$ is even, the number of primes $p$ with $\lambda_{\text{even}, p} = \frac12$ may be arbitrarily large (as we vary the quadratic space $(V,Q)$), but these factors will cancel with the growth of the quadratic twist $L(1-\frac{n}{2}, \chi)$.  From the definition of $E_\delta$ we see that $E_\delta/\Q_p$ is ramified iff $p \mid \Cond(\chi)$, so if $t$ is the number of prime divisors of $q := \Cond(\chi)$, we can write (\ref{Eq:GHY_mass_formula}) 
as 
\begin{equation} \label{Eq:Mass_for_even_n}
\Mass^+(L) =
2^{\frac{2 - n}{2}} \frac{|L(1-\tfrac{n}{2}, \chi)|}{2^t} \prod_{k=1}^{\frac{n-2}{2}} |\zeta(1-2k)| 
    \prod_{p \nmid \Cond(\chi)} \hspace{-.1in} 
    \lambda_{\text{even}, p}.
\end{equation}
Now Lemma \ref{Lemma:bounding_twists} states that $\frac{|L(1-\tfrac{n}{2}, \chi)|}{2^t}$ is bounded for only finitely many conductors $q$, and the product of the remaining $\lambda_{\text{even}, p}$ with $p\nmid q$ is also bounded for only finitely many set of primes.  The result now follows by applying the same argument as when $n$ is odd.
%
%
%
\end{proof}


The following Lemma gives a simple relation between the mass 
and the proper mass that will be useful for us later.
%
\begin{lem} Suppose that $Q$ is a totally definite quadratic form over a totally real number field $F$.  Then
$$
\Mass^+(Q) = 2\cdot\Mass(Q).
$$
\end{lem}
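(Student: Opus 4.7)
The plan is to prove this by a direct grouping argument that compares the two sums class-by-class. The totally definite hypothesis is used only to guarantee that $|\Aut(Q')|$ and $|\Aut^+(Q')|$ are finite for each representative $Q'$, so that both $\Mass$ and $\Mass^+$ are well-defined rational numbers: the automorphism group of a totally definite form embeds as a discrete subgroup of a product of compact orthogonal groups at the real places of $F$, hence is finite.

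First I would introduce two local indices that organize the comparison. For each class $\Cls(Q')$ in $\Gen(Q)$, set
\[
  \epsilon(Q') := [\Aut(Q') : \Aut^+(Q')] \in \{1,2\},
  \qquad
  n^+(Q') := \#\{\Cls^+(Q'') : \Cls(Q'')=\Cls(Q')\} \in \{1,2\}.
\]
The key claim is that $\epsilon(Q') \cdot n^+(Q') = 2$ always. To see this, let $GL_n(O_F)$ act on $\Cls(Q')$ through the quotient $GL_n(O_F)/SL_n(O_F)\cong \{\pm 1\}$ on the right. If $Q'$ admits an improper automorphism, then every improper equivalence $Q'\circ M$ (with $\det M=-1$) is also realized by a proper $M'\in SL_n(O_F)$, so $\Cls(Q')=\Cls^+(Q')$; here $\epsilon=2$ and $n^+=1$. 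Otherwise $\Cls(Q')$ splits into exactly two proper classes (the nontrivial coset action is free), giving $\epsilon=1$ and $n^+=2$.

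Next I would observe that for $Q'_1, Q'_2$ in the same class $\Cls(Q')$, conjugation by an equivalence identifies $\Aut(Q'_1)\cong \Aut(Q'_2)$ and $\Aut^+(Q'_1)\cong \Aut^+(Q'_2)$; in particular $|\Aut|$, $|\Aut^+|$, and the indices $\epsilon, n^+$ depend only on $\Cls(Q')$. With this in place, I would regroup the proper-mass sum by improper classes:
\[
  \Mass^+(Q)
  = \sum_{\Cls^+(Q'')\in \Gen(Q)} \frac{1}{|\Aut^+(Q'')|}
  = \sum_{\Cls(Q')\in \Gen(Q)}
      \frac{n^+(Q')}{|\Aut^+(Q')|}
  = \sum_{\Cls(Q')\in \Gen(Q)} \frac{n^+(Q')\,\epsilon(Q')}{|\Aut(Q')|}.
\]
Using $n^+(Q')\,\epsilon(Q')=2$ uniformly, the right-hand side collapses to $2\,\Mass(Q)$, yielding the lemma.

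The only genuine content is the index identity $\epsilon\cdot n^+=2$, which is a bookkeeping statement about how the $GL/SL$-coset action relates orbits (proper classes) to stabilizers (proper automorphism groups); I do not expect any real obstacle, since the totally definite hypothesis eliminates the only thing that could go wrong, namely infinite automorphism groups (as happens, e.g., for indefinite forms where one would need to work with the Tamagawa measure instead of counting measure).
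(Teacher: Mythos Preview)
Your argument is correct and follows exactly the paper's approach: group the proper classes by (improper) class, observe that either the class coincides with a single proper class and $|\Aut|=2|\Aut^+|$, or it splits into two proper classes with $|\Aut|=|\Aut^+|$, so that in either case the contribution to $\Mass^+$ is $2/|\Aut|$. The paper writes this out in one line without naming $\epsilon$ and $n^+$, but the content is identical.

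One small slip: $GL_n(O_F)/SL_n(O_F)\cong O_F^\times$, not $\{\pm1\}$, when $F\neq\Q$. What actually makes $n^+(Q')\le 2$ is that any \emph{isometry} between two forms in the same class has determinant $\pm1$ (since $(\det M)^2=1$ from $M^tAM=A$), so the relevant dichotomy is $O(V)/SO(V)=\{\pm1\}$ rather than $GL_n/SL_n$. Over $\Z$ these coincide, which is why neither you nor the paper runs into trouble.
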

\begin{proof}
We notice that $\Cls(Q)$ is a union of at most 2 proper classes, and $\Cls(Q) = \Cls^+(Q) \iff Q$ has an automorphism of determinant $-1$ $\iff \Aut(Q) = 2 \Aut^+(Q)$.  Therefore in either case we have  
\begin{align*}
\Mass^+(Q) 
&= \sum_{\Cls^+(Q'') \in \Gen(Q)} \frac{1}{\#\Aut^+(Q'')}
= \sum_{\Cls(Q') \in \Gen(Q)} \sum_{\Cls^+(Q'') \in \Cls(Q')} \frac{1}{\#\Aut^+(Q'')} \\
& = \sum_{\Cls(Q') \in \Gen(Q)} \frac{2}{\#\Aut(Q')} 
= 2\cdot \Mass(Q).
\end{align*}
\end{proof}


\section{Translation of local invariants}

In this section we describe the explicit translation between the local quadratic space invariants $(n, \delta_v, w_v)$ in \cite{GHY} and the standard local invariants $(n, d_v, c_v)$.

\begin{lem} \label{Lem:GHY_and_Std_invariants}
Given a quadratic space over a non-archimedean local field $\Q_v$, there is an explicit bijection between the local invariants $(n, \delta_v, w_v)$ of \cite{GHY} and the standard local invariants $(n, d_v, c_v)$, explicitly given in the Table local invariants below.
\end{lem}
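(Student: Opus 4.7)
The plan is to verify the lemma by a direct case analysis: for each combination of GHY local invariants $(n,\delta_v,w_v)$ appearing in the tables, I will exhibit a canonical diagonal representative of the associated quadratic space over $\Q_v$, compute its standard invariants $(d_v, c_v)$ from the definitions, and read off the translation. Because both $(n,\delta_v,w_v)$ and $(n,d_v,c_v)$ are complete systems of invariants for non-degenerate quadratic spaces over $\Q_v$, it then suffices to check that the resulting map is bijective on the level of types.

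First I would recall the local classification: the isomorphism class of an $n$-dimensional non-degenerate quadratic space over $\Q_v$ is determined by $(d_v,c_v) \in \Q_v^\times/(\Q_v^\times)^2 \times \{\pm 1\}$ (subject to mild compatibility conditions in dimensions $\leq 2$), where $d_v$ and $c_v$ were defined earlier via an orthogonal diagonalization. On the GHY side, $\delta_v$ is a normalized representative of the discriminant squareclass (typically chosen with $\ord_p(\delta_v) \in \{0,1\}$), and $w_v \in \{\pm 1\}$ is the Witt-type sign that records whether the quadratic space is ``as split as possible'' given its dimension and discriminant, i.e.\ whether the anisotropic kernel has minimal or maximal dimension. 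When $n$ is even, the quadratic extension $E_\delta = \Q_v(\sqrt{\delta_v})$ is unramified or ramified according to the parity of $\ord_p(\delta_v)$ (for $p$ odd) or a slightly more intricate condition for $p=2$, which accounts for the extra split of the ``$\ord_p(\delta)$ odd'' column in the even-dimensional table into types II and III.

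Next I would go row by row through the GHY Mass-type tables. For each label (I, II$\pm$, III) I would write down a canonical diagonal form, e.g.\ an orthogonal sum of hyperbolic planes $\langle 1,-1\rangle$ together with a small anisotropic kernel chosen so that its discriminant represents $\delta_v$ and its anisotropic type matches $w_v$. From this diagonalization, $d_v$ is read off as the product of the entries modulo squares, and $c_v$ is computed from the Hilbert-symbol product $\prod_{i<j}(a_i,a_j)_v$, using the standard formulas $(u,v)_v$ for units and uniformizers at odd $p$, and the table of Hilbert symbols for $\Q_2$ at $p=2$. The hyperbolic summands contribute trivially to $c_v$, so the calculation reduces to the anisotropic kernel and the interaction of that kernel with the split part. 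After populating the table I would confirm bijectivity by a counting check: in each dimension, the number of GHY types equals the number of $(d_v,c_v)$ pairs realized by quadratic spaces of that dimension.

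The main obstacle will be the $p=2$ case, where $\Q_2^\times/(\Q_2^\times)^2$ has order $8$ and the Hilbert symbols $(a,b)_2$ depend on residues modulo higher powers of $2$; here the ramified/unramified dichotomy for $E_\delta$ cannot be read off purely from the parity of $\ord_2(\delta_v)$, so the canonical representatives must be chosen with care in each squareclass and the Hilbert-symbol products checked individually. A secondary subtlety is the low-dimensional edge cases (the dashes in the mass-type tables), which correspond to combinations of $(w_v,\ord_p(\delta_v))$ that either force a trivial adjustment factor or cannot occur; I would verify these non-occurrences by showing that the corresponding $(d_v,c_v)$ pair is not attained by any maximal lattice in the given dimension. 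Once these are in hand, the bijection is complete and yields the entries of the translation table referenced in the statement.
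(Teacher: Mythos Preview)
Your overall strategy—write each local quadratic space as $\Hyp^r$ plus a small anisotropic kernel, compute $(d_v,c_v)$ from that diagonalization, and verify bijectivity—is exactly the paper's approach. However, there is one concrete error and two places where you are making life harder than necessary.

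The error is the assertion that ``the hyperbolic summands contribute trivially to $c_v$.'' This is false already at $p=2$: a single hyperbolic plane $\langle 1,-1\rangle$ has $c_v=(1,-1)_v=1$, but under the direct-sum formula
\[
(n,d,c)\oplus(n',d',c') \;=\; (n+n',\,dd',\,cc'\,(d,d')_v)
\]
one finds inductively that $\Hyp^r$ has $c_v=(-1,-1)_v^{\lfloor r/2\rfloor}$, and $(-1,-1)_2=-1$. So the split part contributes a genuine sign depending on $r\bmod 4$, and there is an additional cross term $(d_{\Hyp^r},d_{\mathrm{aniso}})_v=((-1)^r,d_{\mathrm{aniso}})_v$ coming from the interaction. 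If you drop these, your table will be wrong in exactly the rows where the $\lfloor r/2\rfloor$ and $((-1)^r,\cdot)_v$ factors appear in the paper's formulas.

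This is also why your anticipated ``main obstacle at $p=2$'' largely evaporates in the paper's treatment. Rather than computing Hilbert symbols case by case from tables of residues, the paper works entirely through the direct-sum formula above: once you know the standard invariants of $\Hyp^r$ and of each of the five possible anisotropic kernels, one application of the formula gives closed expressions in Hilbert symbols that are valid uniformly at every non-archimedean place, including $p=2$. No separate $2$-adic casework is needed.

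Finally, the dashes in the GHY mass-type tables are not ``combinations that cannot occur'': they are precisely the generic cases (maximally split, $w_v=1$ with $\ord_p(\delta)$ even, or the analogous even-dimensional unramified case) where the adjustment factor $\lambda_p$ equals $1$. These cases certainly arise and must be included in the bijection; they are simply omitted from the mass-type labeling because they contribute nothing to the mass product.
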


\begin{proof}
By induction with the formula $(n,d,c) \oplus (n', d', c') = (n + n', d\cdot d', c\cdot c' \cdot (d, d')_v)$ for the standard local invariants of the direct sum of two quadratic spaces, one can show that $\Hyp^r$ has standard invariants $(n,d,c) = (2r, (-1)^r, (-1,-1)_v^{\lfloor \frac{r}{2} \rfloor})$.  Then by applying this formula again to the five spaces below, we can generate a table that quickly determines the standard invariants from the GHY invariants.  To translate in the opposite direction we compute the standard invariants $(n_\text{aniso}, d_\text{aniso}, c_\text{aniso})$ of the maximal anisotropic subspace.



One can easily see that distinct GHY triples $(n, \delta, w)$ give rise to distinct standard triples $(n, d, c)$ in the first three spaces, and the remaining two odd dimensional cases follow by taking cases on the parity of $r$.
\end{proof}


\centerline{
\begin{tabular}{| c | c | c |} 
\multicolumn{3}{c}{\bf Tables of GHY and Standard local invariants} \\
\hline
\multirow{2}{*}{Quadratic  Space}  & \multirow{2}{*}{Dim.} & Aniso. \\
 & & dim. \\
\hline
\hline
$\Hyp^r$ & $2r$ & 0   \\ 
$\Hyp^{r-2} \oplus D$ & $2r$ & 4 \\ 
$\Hyp^{r-1} \oplus \al(x^2 - \delta y^2)$ & $2r$ & 2 \\ 
\hline
$\Hyp^r \oplus \delta x^2$ & $2r+1$ & 1  \\ 
$\Hyp^{r-1} \oplus -\delta(ax^2  + by^2 + abz^2 )$ & $2r+1$ & 3 \\
\hline
\end{tabular}
}
\bigskip

%

%
\centerline{
\begin{tabular}{|  c | c || c | c | c || c | c |}
\hline
\multirow{2}{*}{Dim.} & Aniso. &
\multicolumn{3}{c||}{GHY} & \multicolumn{2}{c|}{Standard}  \\
\cline{3-7}
 & dim. & Type & $\delta$ & $w$ & $d$ & $c$ \\
\hline
\hline
 $2r$ & 0   
    & -- & 1 & 1 & $(-1)^r$ & $(-1,-1)_v^{\lfloor \frac{r}{2} \rfloor}$ \\ 
 $2r$ & 4
    & I & 1 & $-1$ & $(-1)^{r}$ & $-(-1,-1)_v^{\lfloor \frac{r}{2} \rfloor}$ \\ 
 $2r$ & 2
    & II, III & $\delta$ & $c_\text{aniso}$ & $(-1)^r \delta$ 
    & $(-1,-1)_v^{\lfloor \frac{r-1}{2} \rfloor} \cdot w \cdot ((-1)^{r-1}, -\delta)_v$ \\ 
\hline
 $2r+1$ & 1 
    &--, I& $\delta$ & 1 & $(-1)^r \delta$ 
    & $(-1,-1)_v^{\lfloor \frac{r}{2}\rfloor} \cdot ((-1)^{r}, \delta)_v$ \\ 
 $2r+1$ & 3
    &II$\pm$& $\delta$ & $-1$ & $(-1)^{r} \delta$ 
    & $-(-1,-1)_v^{\lfloor \frac{r-1}{2}\rfloor} \cdot ((-1)^{r-1}, -\delta)_v \cdot (-1, \delta)_v$ \\ 
%
\hline
\end{tabular}
}
\bigskip

To determine which collections of local spaces can be assembled into a global rational space, we rephrase the local-global conditions in terms of the GHY local invariants.

\begin{lem}
Suppose that $(V, Q)$ is a positive definite quadratic space over $\Q$.  Then the GHY local invariants satisfy the product formula 
\begin{equation} \label{Eq:GHY_prod_formula}
\prod_p w_p = 
(-1)^{\lfloor \frac{n}{4} \rfloor}
\prod_p (-1, \delta)_p^{\lfloor \frac{n+1}{2} \rfloor}.
%
\end{equation}
Moreover, a collection of local quadratic spaces $\{(V_p, Q_p)\}_{p}$ of common dimension $n \geq 2$ can be assembled into a positive definite rational space over $\Q$ iff there exists some $\Delta \in \Q^\times>0$ so that $\Delta (\Q_p^\times)^2 = (-1)^{\lfloor \frac{n}{2} \rfloor} \delta_p (\Q_p^\times)^2$ for all primes $p$, and equation (\ref{Eq:GHY_prod_formula}) is satisfied.
\end{lem}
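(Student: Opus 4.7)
The plan is to reduce both assertions to the classical Hasse-Minkowski theorem together with Hilbert reciprocity, by routing every local invariant through the translation of Lemma \ref{Lem:GHY_and_Std_invariants}.

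First, I would read off from the translation table the uniform identity $\delta_p (\Q_p^\times)^2 = (-1)^{\lfloor n/2\rfloor} d_p (\Q_p^\times)^2$, which holds in each of the five rows (in the aniso-dim $0, 4$ rows because $\delta = 1$ and $d = (-1)^r$, in the remaining rows directly from the displayed $d$-column). Consequently, asking that some $\Delta \in \Q^\times>0$ satisfy $\Delta (\Q_p^\times)^2 = (-1)^{\lfloor n/2 \rfloor} \delta_p (\Q_p^\times)^2$ at every prime $p$ is literally the classical discriminant compatibility condition for the standard invariants, specialized to the positive square class that is forced by positive definiteness at the archimedean place.

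For the product formula (\ref{Eq:GHY_prod_formula}), I would solve each row of the translation table for $w_p$ in terms of $c_p$, a power of $(-1,-1)_p$, and a Hilbert symbol of the form $((-1)^{r-1}, \pm \delta)_p$ or $((-1)^r, \delta)_p$ (for the aniso-dim $0, 1$ rows, where $w_p = 1$, this expression reduces to a relation among $c_p$ and $(-1,-1)_p$-terms). Multiplying these relations over all finite $p$ yields three factors. The first is $\prod_p c_p$, which by Hilbert reciprocity for the global Hasse invariant equals $c_\infty^{-1} = 1$, since $(V,Q)$ is positive definite. The second is $\prod_p (-1,-1)_p^{e(n)}$ for an explicit exponent $e(n)$ aggregating the floor-function powers from the table; Hilbert reciprocity reduces this to $(-1,-1)_\infty^{e(n)} = (-1)^{e(n)}$, and a direct parity count in $r \bmod 4$ gives $e(n) \equiv \lfloor n/4 \rfloor \pmod 2$. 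The third is a product of $((-1)^{\epsilon}, \pm\delta)_p$ terms, which by bilinearity of the Hilbert symbol and the identity $\delta_p = (-1)^{\lfloor n/2 \rfloor} d_p$ mod squares collapses into $\prod_p (-1, \delta)_p^{\lfloor (n+1)/2 \rfloor}$ once the dimension-dependent signs are absorbed into the previous factor.

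For the local-global assembly, I would then invoke Hasse-Minkowski in its classical form: a collection of local quadratic spaces $(V_p, Q_p)$ of common dimension $n \geq 2$, together with a prescribed positive definite archimedean behavior, arises from a global $\Q$-space iff the $d_p$ glue to a single global square class in $\Q^\times$, the Hasse invariants $c_p$ satisfy the product formula with $c_\infty = 1$, and only finitely many $c_p$ differ from $1$. Translating each of these classical conditions through Lemma \ref{Lem:GHY_and_Std_invariants} yields exactly the conditions of the lemma, with positive definiteness at $\infty$ enforcing $\Delta > 0$; the finiteness condition on the $c_p$ corresponds to the automatic finiteness of primes where the local space is non-generic.

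The main obstacle is bookkeeping rather than conceptual: the $c_p$-formulas in the aniso-dim $2$ and aniso-dim $3$ rows involve an extra Hilbert symbol $((-1)^{r-1}, -\delta)_p$ whose parity combines non-uniformly with the $(-1,-1)_p^{\lfloor r/2 \rfloor}$ terms, so isolating the clean exponent $\lfloor n/4 \rfloor$ on the right-hand side of (\ref{Eq:GHY_prod_formula}) requires a careful case analysis of $n$ modulo $4$ to confirm that the three parity contributions align as claimed.
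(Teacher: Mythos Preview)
Your proposal is correct and follows essentially the same route as the paper: translate the Hasse-invariant product formula $\prod_v c_v = 1$ (with $c_\infty = 1$ by positive definiteness) through the table of Lemma~\ref{Lem:GHY_and_Std_invariants}, simplify the resulting $(-1,-1)_p$ and $((-1)^{\epsilon},\pm\delta)_p$ factors via Hilbert reciprocity and a case analysis on $n \bmod 4$, and cite the classical Hasse--Minkowski existence criterion (Cassels) for the second assertion via $d_p = (-1)^{\lfloor n/2\rfloor}\delta_p$. The paper's own proof is a terser version of exactly this argument.
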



\begin{proof}
Since $(V, Q)$ is positive definite, we have $c_v = 1$ for all archimedean $v$.
From the table above, we express $\prod_v c_v  = \prod_p c_p = 1$ as a product 
$\prod_p w_p$ with some extra Hilbert symbol factors at each place.  By taking cases on the parity of $n$ and of $r$, and factoring out $\prod_v (-1,-1)^{\lfloor \frac{r}{2} \rfloor} = 1$, we obtain the formula above.
(In the case where $n$ and $r$ are even  the anisotropic dimension is 0 or 4$\iff \delta = 1$, so we see that the product $\prod_p(-1, \delta)$ agrees with the same product over primes with anisotropic dimension 2.)

The existence statement is a rephrasing of \cite[Thrm 1.3, p77]{Cassels:1978aa} using the GHY local invariants, and here $\Delta \geq 0$ guarantees that we get the correct local determinant squareclass at $\infty$.  Here $d = (-1)^r \delta = (-1)^{\lfloor \frac{n}{2} \rfloor} \delta$.
\end{proof}

\section{Bounding Eligible Quadratic Twists} \label{Sec:Twist_Bounds}

\subsection{Functional equation for primitive quadratic twists of the Dirichlet $L$-function}

Let $\chi$ be a primitive quadratic character of conductor $q>1$.  Then we define the completed $L$-function
$$
\Lambda(s, \chi) := q^{\frac{s}{2}} \cdot \pi^{\frac{-s}{2}} \Gamma(\tfrac{s}{2}) \cdot  L(s, \chi)
$$
which satisfies the functional equation
$$
\Lambda(s, \chi) = \frac{\tau(\chi)}{\sqrt{q}} \cdot \Lambda(1-s, \overline{\chi})
$$
where $\tau(\chi) := \sum_{0 < m < q := \text{cond}(\chi)} \chi(m) e^\frac{2\pi i m}{q}$ is the usual Gauss sum.

\begin{lem} \label{Lemma:bounding_twists}
Suppose that $\chi$ is a primitive quadratic Dirichlet character of conductor $q$ with $\chi(-1) = (-1)^a$, and $r \in \N \geq 2$ with $r \equiv a \pmod{2}$.  Let $t$ be the number of distinct positive prime factors of $q$.  Then for any constant $K \in \R>0$ we have that 
$$\frac{|L(1-r, \chi)|}{2^t} > K$$
whenever 
$q$ has a positive divisor $q'$ satisfying 
$$
\prod_{p \mid q'} \frac{p^{\ord_p(q') (r - \frac{1}{2})}}{2} 
    > \frac{K \cdot (2\pi)^{r} \zeta(r)}{2\,\Gamma(r) \, \zeta(2r)}.
$$ 
In particular, this holds if $q$ is divisible by any prime power $p^\nu>1$ where
$$
p^\nu >
\(\frac{K \cdot (2\pi)^{r} \zeta(r)}{\Gamma(r) \zeta(2r)}\)^{\frac{2}{2r-1}}.
$$
\end{lem}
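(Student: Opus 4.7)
The plan is to invert the functional equation to express $|L(1-r,\chi)|$ in terms of $|L(r,\chi)|$, use the Euler product to bound $|L(r,\chi)|$ from below, and then compare the resulting expression against $K\cdot 2^t$. Throughout, the hypothesis $r\equiv a\pmod 2$ is exactly what guarantees that the gamma factors that appear are finite and nonzero, matching the fact that $L(1-r,\chi)\ne 0$.

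Applying the functional equation at $s=r$, using $|\tau(\chi)|=\sqrt{q}$ for a primitive character and the parity-adjusted completed $L$-function $\Lambda(s,\chi) = (q/\pi)^{(s+a)/2}\Gamma((s+a)/2)L(s,\chi)$ (which recovers the $\Lambda$ written in the paper when $a=0$ and needs only the obvious modification when $a=1$), I obtain
$$|L(1-r,\chi)| = q^{r-1/2}\,\pi^{1/2-r}\cdot \frac{\Gamma((r+a)/2)}{|\Gamma((1-r+a)/2)|}\cdot |L(r,\chi)|.$$
To simplify the gamma ratio I apply the reflection formula $\Gamma(z)\Gamma(1-z)=\pi/\sin(\pi z)$ at $z=(1-r+a)/2$, together with the observation that $r\equiv a\pmod 2$ forces $(1-r+a)/2$ to be a half-integer with $|\sin(\pi(1-r+a)/2)|=1$, yielding $|\Gamma((1-r+a)/2)| = \pi/\Gamma((1+r-a)/2)$. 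Regardless of whether $a=0$ or $a=1$, the unordered pair $\{(r+a)/2,\,(1+r-a)/2\}$ equals $\{r/2,\,(r+1)/2\}$, so Legendre's duplication formula gives $\Gamma((r+a)/2)\Gamma((1+r-a)/2) = 2^{1-r}\sqrt{\pi}\,\Gamma(r)$. These combine to collapse the functional equation into the clean identity
$$|L(1-r,\chi)| = q^{r-1/2}\cdot\frac{2\,\Gamma(r)}{(2\pi)^r}\cdot|L(r,\chi)|.$$

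For the lower bound on $|L(r,\chi)|$, I would use the Euler product together with the trivial estimate $|1-\chi(p)p^{-r}|\leq 1+p^{-r} = (1-p^{-2r})/(1-p^{-r})$ to obtain $|L(r,\chi)|\geq \zeta(2r)/\zeta(r)$. Dividing by $2^t=\prod_{p\mid q}2$ then produces
$$\frac{|L(1-r,\chi)|}{2^t}\;\geq\;\frac{2\,\Gamma(r)\,\zeta(2r)}{(2\pi)^r\,\zeta(r)}\cdot\prod_{p\mid q}\frac{p^{\ord_p(q)(r-1/2)}}{2},$$
and the threshold stated in the lemma is exactly the inequality forcing the right-hand side to exceed $K$.

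The passage from $q$ to any divisor $q'\mid q$ is immediate: for $r\geq 2$ and $p\geq 2$, every factor $p^{\ord_p(q)(r-1/2)}/2\geq 2^{r-1/2}/2>1$, so dropping primes $p\mid q$ with $p\nmid q'$ and decreasing the exponents from $\ord_p(q)$ to $\ord_p(q')$ only weakens the product. The ``in particular'' clause is the case $q'=p^\nu$, obtained by solving $(p^\nu)^{r-1/2}/2 > K(2\pi)^r\zeta(r)/(2\Gamma(r)\zeta(2r))$ for $p^\nu$. The principal difficulty will be uniform bookkeeping of the gamma factors across both parities $a\in\{0,1\}$; the combination of reflection and duplication is exactly what collapses the two cases into the single expression above.
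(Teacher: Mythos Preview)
Your proof is correct and follows essentially the same strategy as the paper: functional equation to pass from $1-r$ to $r$, the Euler-product lower bound $|L(r,\chi)|\geq \zeta(2r)/\zeta(r)$, and then monotonicity of the factors $p^{\ord_p(q)(r-1/2)}/2>1$ to pass to a divisor $q'$. The only cosmetic difference is that the paper starts from the asymmetric form of the functional equation containing $\cos(\pi(s-a)/2)$, which is $\pm 1$ at $s=r\equiv a\pmod 2$ and immediately yields $|L(1-r,\chi)| = 2(2\pi)^{-r}q^{r-1/2}\Gamma(r)\,|L(r,\chi)|$; you instead start from the completed $\Lambda$-form and reach the same identity via reflection plus Legendre duplication, which is a slightly longer but equally valid route.
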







\begin{proof}
The functional equation 
$$
L(1-s, \overline{\chi}) = \ve(\chi) \cdot 2 \cdot (2\pi)^{-s} 
	q^{s - \frac12} \cos(\tfrac{\pi(s-a)}{2}) \Gamma(s) L(s, \chi)
$$
for Dirichlet $L$-functions $L(s, \chi)$ of primitive characters $\chi$ with conductor $q$ 
where $|\ve(\chi)| = 1$ and $s := r \equiv a \pmod{2}$ gives for quadratic $\chi =  \overline{\chi}$
the relation  
$$
2 \cdot (2\pi)^{-r} 
	q^{s - \frac12}  \Gamma(s) |L(r, \chi)| 
= |L(1-r, \chi)|. 
$$
Since $r \in \R > 1$ we have that $L(r, \chi) \geq 
 \frac{\zeta(2r)}{\zeta(r)}$ and so 
$$
q^{r - \frac12} \cdot \frac{2\Gamma(r) \zeta(2r)}{(2\pi)^{r} \zeta(r)} 
\leq 
|L(1-r, \chi)|. 
$$
Thus 
$$
K <
\frac{q^{r - \frac12}}{2^t} \cdot \frac{2\Gamma(r) \zeta(2r)}{(2\pi)^{r} \zeta(r)} 
\leq 
\frac{|L(1-r, \chi)|}{2^t}
$$
when 
$$
\prod_{p\mid q} \frac{p^{\ord_p(q) (r - \frac{1}{2})}}{2} > \frac{K \cdot (2\pi)^{r} \zeta(r)}{2\Gamma(r) \zeta(2r)}.
$$
Since $q > 1$ we see that the Euler product has at least one factor, and $r\geq 2$ tells us that each Euler factor is  $>1$.  Since the left side of this inequality increases as $q$ becomes more divisible, this holds if we replace $q$ by any $q' \mid q$, proving the theorem.
%
\end{proof}




%




\section{Strategy and explicit bounds for enumeration} \label{Sec:Bounds_for_maximal_lattices}
In this section we describe an explicit algorithm for enumerating all maximal $\Z$-valued positive definite quadratic lattices $L$ of fixed rank $n\geq 3$ with class number $h(L) \leq B$, for any given $B \in \R>0$.  We do this by computing explicit (refinements of the) bounds $B'$ and $B''$ in the implications
\begin{align} \label{Imp:mass_eligible}
h(L) \leq B & \quad \xRightarrow{\hphantom{aaaaa}} \quad
\Mass^+(L) = \Mass^+(V,Q) \leq B \notag \\
& \quad \xRightarrow{\text{Lemma \ref{Lemma:bounding_twists}}\hphantom{a}} \quad 
\text{$\Cond(\chi) \leq B'$ 
when $n$ is even} \\ 
& \quad\xRightarrow{\text{ fixed $\chi$}\hphantom{a}}\quad
\prod_p \lambda_{\text{odd/even}, p} \leq B''.  \notag
\end{align}

\begin{defn}
We say that an object (e.g. quadratic space, character, local GHY mass-type, local invariant tuple or quadratic lattice) is {\bf mass-eligible} if it arises in the implication (\ref{Imp:mass_eligible}) above.
\end{defn}



Our algorithm for computing those maximal quadratic lattices $L$ as above with $h(L) \leq B$ proceeds in the following steps, which enumerates the mass-eligible objects in implication (\ref{Imp:mass_eligible}) in reverse order:
\begin{enumerate}
\item[1)] Evaluate the zeta product exactly.
\item[2)] When $n$ is even, determine the finitely many mass-eligible quadratic characters $\chi$.
\item[3)] Determine the finitely many mass-eligible local GHY mass-types.
\item[4)] Construct all possible rational quadratic spaces $(V, Q)$ for each mass-eligible tuple of local GHY mass-types.
\item[5)] Construct a maximal $\Z$-valued lattice $L$ on each mass-eligible $(V, Q)$.
\item[6)] Compute the class number $h(L)$, and check if $h(L) \leq B$.
\end{enumerate}

\medskip 
We now give the explicit bounds used to enumerate objects in every step above.  The special values of $\zeta(1-2k)$ when $k \in \N$ are given by the usual Bernoulli number formula 
$$
\zeta(1-2k) = \frac{-B_{2k}}{2k} 
\qquad \text{where $\quad\frac{x}{e^{x} - 1} =: \sum_{k=0}^\infty B_k \frac{x^k}{k!}$}.
$$

For steps 2) and 3), the following lemma is useful.
\begin{lem} \label{Lem:lambda_below_one}
Suppose that $Q$ is a definite maximal $\Z$-valued quadratic form.  Then the GHY adjustment factors in the Tables of $\lambda_{\text{odd/even},p}$ 
satisfy
$$
\lambda_{\text{odd/even},p} < 1  \implies
\begin{cases}
\text{$n=3$ and $p=2$, with GHY Mass-type I or II$-$}, \\
\text{$n=4$ and $p=2$, with GHY Mass-type I}, \\
\text{$n$ is even with GHY Mass-type III}.
\end{cases}
$$
In each of these cases, we have $\lambda_{\text{odd/even},p} = \tfrac12$.
\end{lem}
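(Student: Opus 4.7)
The plan is a direct case-by-case inspection of the two tables defining $\lambda_{\text{odd/even},p}$. The entries identically equal to $1$ (the $w=+1$ with $\ord_p(\delta)$ even rows in both tables, and the $w=+1$ unramified row in the even table) cannot satisfy $\lambda<1$ and so are discarded immediately. The ramified row of the even table (Type III) already reads $\lambda_{\text{even},p}=\tfrac{1}{2}$, accounting for the third bullet of the conclusion. Every remaining entry is a rational function in $p$ whose numerator grows rapidly with $n$, so the condition $\lambda<1$ can only force $(n,p)$ to take very small values; I will verify each remaining case in turn.

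For odd $n$, Type I ($w=-1$ with $\ord_p(\delta)$ even) reads $\lambda=(p^{n-1}-1)/(2(p+1))$. Rearranging $\lambda<1$ yields $p^{n-1}<2p+3$, and since $n\geq 3$ the only solution is $n=3$, $p=2$, where the value is exactly $\tfrac{1}{2}$. The odd-$\ord_p(\delta)$ row is $(p^{(n-1)/2}+w)/2$: for $w=+1$ (Type II$+$) this is at least $\tfrac{3}{2}>1$ and contributes nothing; for $w=-1$ (Type II$-$) the condition becomes $p^{(n-1)/2}<3$, again forcing $n=3$, $p=2$ and $\lambda=\tfrac{1}{2}$.

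For even $n$, Type I gives $\lambda=\bigl((p^{(n-2)/2}-1)(p^{n/2}-1)\bigr)/\bigl(2(p+1)\bigr)$. At $n=4$, using $p^2-1=(p-1)(p+1)$, this collapses to $(p-1)^2/2$, which is less than $1$ only when $p=2$, producing $\lambda=\tfrac{1}{2}$. For $n\geq 6$ both numerator factors already dominate $p+1$ (one checks $p=2$ directly, and larger $p$ only improves the estimate), so $\lambda\geq 1$. Type II ($w=-1$, odd $\ord_p(\delta)$, $E_\delta$ unramified) reads $\bigl((p^{(n-2)/2}+1)(p^{n/2}+1)\bigr)/\bigl(2(p+1)\bigr)$, which strictly exceeds the corresponding Type I expression (plus signs replacing minus signs in each factor), and is visibly $\geq 1$ for all $p\geq 2$ and all even $n\geq 4$.

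The argument is elementary and I do not anticipate any serious obstacle; the only mildly nontrivial algebraic step is simplifying the Type I even-dimensional factor at $n=4$ and confirming that it grows past $1$ once $n\geq 6$ or $p\geq 3$. Assembling the four verified sub-cases reproduces exactly the three enumerated alternatives of the lemma, each of which attains $\lambda=\tfrac{1}{2}$.
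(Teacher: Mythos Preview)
Your proof is correct and follows essentially the same approach as the paper: a direct inspection of the table entries. The paper's version is terser---it first observes that among the non-trivial types the minimum is attained at II$-$ (odd case) and at I (even case, excluding III), then checks when that minimum drops below $1$---whereas you verify each type separately; but the underlying computation is the same, and your more explicit treatment of each inequality is perfectly sound. One small remark: in the even Type II case, the observation that it ``strictly exceeds the corresponding Type I expression'' is not by itself enough (since Type I can be $<1$), but you correctly supplement it with the direct check that Type II is $\geq 1$, so the argument is complete.
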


\begin{proof}
If $n$ is odd, then we see that the minimal $\lambda_{\text{odd},p}$ will be attained with GHY mass-type II$-$, and this is $<1 \iff n=3$ and $ p=2$.  In this case the GHY mass-type I has the same value, $\lambda_{\text{odd},2} = \frac{1}{2}$.

When $n$ is even and GHY mass-type $\neq$ III, we see that the minimal $\lambda_{\text{even},p}$ will be attained with GHY mass-type I, which is $<1 \iff n=4$ and $ p=2$. Increasing either $p$ or $n$ gives $\lambda_{\text{even},p} \geq 1$, as does GHY mass-type II in this case.
\end{proof}

To perform step 2), we use the following explicit upper bound.
%
\begin{lem} \label{Lem:Twist_upper_bound}
Suppose that $L$ is a maximal $\Z$-valued definite quadratic lattice of even rank $n\geq 4$ with $h(L)\leq B$.  Then the primitive quadratic character $\chi(\cdot) := \leg{(-1)^{n/2}\det(Q)}{\cdot}$ satisfies the bound
\begin{equation} \label{Eq:Eligible_twist_bound}
\frac{|L(1-\frac{n}{2}, \chi)|}{2^t}  \leq \frac{B \cdot 2^\frac{n-2}{2}}{\prod_{k=1}^\frac{n-2}{2} |\zeta(1-2k)|} 
\cdot 
\begin{cases}
2 & \text{if $n=4$,} \\
1 & \text{if $n \geq 6$,} \\
\end{cases}
\end{equation}
where $t$ in the number of prime divisors of $\Cond(\chi)$.
There are only finitely many $\chi$ for which (\ref{Eq:Eligible_twist_bound}) holds, and these mass-eligible $\chi$ can be explicitly enumerated by the conductor bounds in Lemma \ref{Lemma:bounding_twists}.
%
\end{lem}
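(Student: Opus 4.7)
The plan is to invert the mass formula (\ref{Eq:Mass_for_even_n}) to isolate the central $L$-value factor $|L(1-\tfrac{n}{2},\chi)|/2^t$ and then bound the surviving pieces separately. First I would convert the hypothesis $h(L)\leq B$ into a bound on the proper mass: combining the earlier identity $\Mass^+(L) = 2\,\Mass(L)$ with the inequality $\Mass(L)\leq h(L)/2$ recorded in Section~\ref{Sec:Strategy} gives $\Mass^+(L) \leq h(L) \leq B$.

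Solving (\ref{Eq:Mass_for_even_n}) for the twisted $L$-value then yields
$$
\frac{|L(1-\tfrac{n}{2},\chi)|}{2^t}
= \frac{\Mass^+(L)\cdot 2^{(n-2)/2}}{\displaystyle \prod_{k=1}^{(n-2)/2}|\zeta(1-2k)| \;\cdot\!\!\! \prod_{p\nmid \Cond(\chi)}\!\!\!\lambda_{\text{even},p}},
$$
so (\ref{Eq:Eligible_twist_bound}) reduces to establishing a lower bound on the tail product $\prod_{p\nmid \Cond(\chi)}\lambda_{\text{even},p}$ of exactly $1$ when $n\geq 6$ and $\tfrac12$ when $n=4$.

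The heart of the argument is this lower bound, where Lemma~\ref{Lem:lambda_below_one} does most of the work. That lemma pins down the only ways $\lambda_{\text{even},p}$ can drop below $1$: GHY mass-type III (at any even $n$) or mass-type I in the special case $n=4$, $p=2$, each contributing exactly $\tfrac12$. Since $E_\delta/\Q_p$ is ramified precisely when $p\mid\Cond(\chi)$, mass-type III only occurs at primes that are already excluded from the tail product. Thus for $n\geq 6$ every surviving factor is $\geq 1$, while for $n=4$ at most the single prime $p=2$ (and only if $2\nmid\Cond(\chi)$) can depress the product to $\tfrac12$. Substituting these lower bounds into the rearranged identity produces precisely (\ref{Eq:Eligible_twist_bound}).

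For the finiteness statement, I would invoke Lemma~\ref{Lemma:bounding_twists} with $r=n/2$ and $K$ taken to be the explicit right-hand side of (\ref{Eq:Eligible_twist_bound}); that lemma produces an effective prime-power bound on any divisor of $\Cond(\chi)$, hence on $\Cond(\chi)$ itself, and only finitely many primitive quadratic characters satisfy such a bound. The main obstacle is really the bookkeeping of the $n=4$, $p=2$ case, which is the sole source of the extra factor of $2$ in the bound; in larger dimensions the argument is essentially automatic once Lemma~\ref{Lem:lambda_below_one} is in hand.
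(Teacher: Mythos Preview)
Your proposal is correct and follows essentially the same route as the paper: start from $\Mass^+(L)\leq h(L)\leq B$, insert the rewritten mass formula (\ref{Eq:Mass_for_even_n}), and use Lemma~\ref{Lem:lambda_below_one} to bound the product $\prod_{p\nmid\Cond(\chi)}\lambda_{\text{even},p}$ below by $1$ (for $n\geq 6$) or $\tfrac12$ (for $n=4$), then appeal to Lemma~\ref{Lemma:bounding_twists} for finiteness. Your treatment of the $n=4$, $p=2$ bookkeeping is exactly the point the paper is making, and the derivation of $\Mass^+(L)\leq h(L)$ from $\Mass^+=2\,\Mass$ and $\Mass\leq h/2$ is precisely what underlies the paper's inequality chain.
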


\begin{proof} By rewriting (\ref{Eq:GHY_mass_formula}) as (\ref{Eq:Mass_for_even_n}), we see that 
$$
B \geq h(L) \geq \Mass^+(L) =
2^{\frac{2 - n}{2}} \frac{|L(1-\tfrac{n}{2}, \chi)|}{2^t} \prod_{k=1}^{\frac{n-2}{2}} |\zeta(1-2k)| 
    \prod_{p \nmid \Cond(\chi)} \hspace{-.1in} 
    \lambda_{\text{even}, p}.
$$ 
However by Lemma \ref{Lem:lambda_below_one} we know that 
$$
\prod_{p \nmid \Cond(\chi)} \hspace{-.1in} \lambda_{\text{even}, p}
\quad =
\prod_{\substack{p \text{ with} \\ \text{Mass-type $\neq$ III}}} \hspace{-.2in} \lambda_{\text{even}, p} 
\quad \geq \quad 
\begin{cases}
\frac{1}{2} & \text{when $n=4$,} \\
1 & \text{when $n\geq 6$,} \\
\end{cases}
$$
which proves the desired bound.  The finiteness statement follows from the prime-power divisibility bound in Lemma \ref{Lemma:bounding_twists}, and the fact that each conductor supports a unique quadratic character of that conductor.
\end{proof}

In step 3) we must construct all possible tuples $T$ (indexed by primes $p\in\N$) of local GHY mass-types $\lambda_{\text{odd/even}, p}$ whose proper masses (as computed using (\ref{Eq:GHY_mass_formula})) satisfy $\Mass^+(T) \leq B$.  This is done by the following Lemma.
\begin{lem}
Suppose that $L$ is a maximal $\Z$-valued definite quadratic lattice of rank $n\geq 3$ with $h(L)\leq B$, with the quadratic character $\chi$ of Lemma \ref{Lem:Twist_upper_bound} specified if $n$ is even.  Let 
$$
B'' := 
\begin{cases}
\displaystyle  
\frac{2^{\frac{n - 3}{2}}}{\prod_{k=1}^{\frac{n-1}{2}} |\zeta(1-2k)|} 
     & \quad \text{if $n$ is odd,}\\
\displaystyle
\frac{2^{\frac{n - 2}{2}}}{|L(1-\tfrac{n}{2}, \chi)| \prod_{k=1}^{\frac{n-2}{2}} |\zeta(1-2k)|}
     & \quad \text{if $n$ is even,}\\
\end{cases}
$$
and 
$$
\epsilon = 
\begin{cases}
2 & \quad \text{if $3 \leq n \leq 4$,}\\
1 & \quad \text{if $n \geq 5$.}\\
\end{cases}
$$
Then the associated adjustment factors $\lambda_{\text{odd/even}, p}$ satisfy
\begin{equation} \label{Eq:lambda_single_bound}
\lambda_{\text{odd/even}, p} \leq \epsilon B'',
\end{equation}
and also 
\begin{equation} \label{Eq:lambda_all_bound}
%
\prod_p \lambda_{\text{odd/even}, p} \leq B''.
\end{equation}
\end{lem}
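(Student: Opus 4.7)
The plan is to invert the exact GHY mass formula and then isolate each individual factor $\lambda_{\text{odd/even},p}$ using the classification of sub-unit factors provided by Lemma \ref{Lem:lambda_below_one}.

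For the product bound (\ref{Eq:lambda_all_bound}), I would begin by rearranging the mass formula---(\ref{Eq:GHY_mass_formula}) in the odd case, or (\ref{Eq:Mass_for_even_n}) in the even case with $\chi$ fixed by hypothesis---to express $\prod_p \lambda_{\text{odd/even},p}$ as $\Mass^+(L)$ divided by the explicit prefactor (a power of $2$, the product of $|\zeta(1-2k)|$ terms, and in the even case also the twisted special value $|L(1-\tfrac{n}{2},\chi)|/2^{t}$). Substituting the hypothesis $\Mass^+(L) \le h(L) \le B$ and recognizing that $B''$ is precisely the resulting explicit bound (the reciprocal of this prefactor, rescaled by $B$) yields (\ref{Eq:lambda_all_bound}) directly.

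For the single-factor bound (\ref{Eq:lambda_single_bound}), I would write
$$\lambda_{\text{odd/even},p} \;=\; \frac{\prod_q \lambda_{\text{odd/even},q}}{\prod_{q \ne p} \lambda_{\text{odd/even},q}},$$
control the numerator by $B''$ via (\ref{Eq:lambda_all_bound}), and control the denominator from below by $1/\epsilon$. Lemma \ref{Lem:lambda_below_one} enumerates every local factor that can drop below $1$: in each such case the value is exactly $\tfrac12$, and the possibilities are (i) Type I or II$-$ at $p=2$ when $n=3$, (ii) Type I at $p=2$ when $n=4$, and (iii) Type III in even dimension. Since the Type III factors have already been absorbed into the $2^{-t}$ term of (\ref{Eq:Mass_for_even_n}), they do not appear in the product bounded by (\ref{Eq:lambda_all_bound}). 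Consequently $\prod_{q \ne p} \lambda_{\text{odd/even},q}$ contains at most one $\tfrac12$-factor, located at $p=2$ and only when $n \in \{3,4\}$; it is therefore $\ge \tfrac12$ when $3 \le n \le 4$ and $\ge 1$ when $n \ge 5$, which matches the value of $\epsilon$ exactly.

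The main bookkeeping obstacle is handling the even-dimensional case cleanly: I need to verify that passing from (\ref{Eq:GHY_mass_formula}) to (\ref{Eq:Mass_for_even_n}) has removed \emph{precisely} the Type III factors (the only ones that could proliferate as $\Cond(\chi)$ grows), so that only at most a single sub-unit factor survives in the remaining product. Once this check is in place, both inequalities reduce to an algebraic manipulation of the mass formula together with the finite case analysis in Lemma \ref{Lem:lambda_below_one}, requiring no further analytic input.
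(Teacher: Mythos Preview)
Your proposal is correct and matches the paper's approach exactly: the paper's own proof is the single sentence ``This follows from (\ref{Eq:GHY_mass_formula}) combined with Lemma \ref{Lem:lambda_below_one},'' and you have faithfully unpacked precisely this argument---inverting the mass formula to get the product bound, then isolating a single factor by bounding the complementary product below via the classification of sub-unit $\lambda$'s. Your identification of the even-dimensional bookkeeping (separating the Type III factors that feed into $2^{-t}$ from the at most one remaining sub-unit factor at $p=2$) is the right point to be careful about, and is exactly what the paper's terse proof leaves implicit.
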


\begin{proof} This follows from (\ref{Eq:GHY_mass_formula}) combined with Lemma \ref{Lem:lambda_below_one}. 
\end{proof}

\noindent
Explicitly, this is done by first enumerating all eligible $\lambda_{\text{odd/even}, p}$ adjustment factors for all eligible primes $p$ using (\ref{Eq:lambda_single_bound}), and then checking all possible products for being mass-eligible using (\ref{Eq:lambda_all_bound}), to obtain a finite list of mass-eligible GHY local invariants.  Note that here several different tuples of local invariants can be associated to the same tuple of mass-eligible GHY mass-types.

In step 4) we use Lemma \ref{Lem:GHY_and_Std_invariants} convert each tuple of GHY local invariants to standard invariants, and then use a constructive Hasse-Minkowski procedure to give a diagonal representative for the unique (positive definite) quadratic space having these local invariants, if one exists.  This procedure is based on the constructive existence proof in \cite[Ch. 6, \S7, pp85-86]{Cassels:1978aa}.

The algorithm for step 5) is described in detail in \cite{Hanke:fk}, and consists of three steps: finding a $\Z$-valued quadratic lattice, finding a maximal $\Z$-valued Hessian-bilinear lattice, and finally finding a maximal $\Z$-valued quadratic lattice.  

Step 6) computes the class number of $L$ by Kneser's method of neighboring lattices at several primes $p \nmid 2\det(L)$.  This algorithm is described in \cite[\S1.10, pp22-23]{Hanke:AWS2009}. 

\bigskip 
By carrying out these computations, we have that 

\begin{thm}
There are exactly 115 maximal $\Z$-valued positive definite quadratic forms $Q$ in $n \geq 3$ variables with class number $h(Q) = 1$.  These are enumerated explicitly in \S\ref{Sec:CN1_Tables}.
\end{thm}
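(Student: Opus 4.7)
The plan is to execute the six-step enumeration algorithm of this section with the specific bound $B = 1$. The theoretical framework is already in place; what the theorem asserts is the concrete output of the computation.

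First I would bound the dimension $n$ explicitly. Since $h(Q) = 1$ forces $\Mass^+(L) \leq h(L) = 1$, and the zeta-product factor $\prod_{k=1}^{\lfloor (n-1)/2 \rfloor} |\zeta(1-2k)|$ grows superexponentially in $n$ via the Bernoulli number formula, Lemma \ref{Lem:lambda_below_one} (which shows that each $\lambda_p \geq \tfrac12$, with $\lambda_p = \tfrac12$ only in three specific cases) gives a small explicit cutoff on $n$, consistent with the $n \leq 10$ bound going back to Watson. This reduces the problem to a finite list of dimensions.

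For each such $n$, I would run the algorithm exactly as listed. In step~(1) the zeta product is evaluated exactly through Bernoulli numbers. In step~(2), when $n$ is even, Lemma \ref{Lem:Twist_upper_bound} combined with the prime-power divisibility bound of Lemma \ref{Lemma:bounding_twists} enumerates the finitely many primitive quadratic characters $\chi$ whose twisted $L$-value at $1 - n/2$ is small enough to permit $\Mass^+(L) \leq 1$, with each $|L(1 - n/2, \chi)|$ computed exactly from generalized Bernoulli numbers. In step~(3), inequality (\ref{Eq:lambda_single_bound}) bounds the primes at which any local adjustment factor can be mass-eligible, and (\ref{Eq:lambda_all_bound}) prunes the finite list of candidate tuples of local GHY mass-types. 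Step~(4) converts each mass-eligible GHY tuple to standard invariants via Lemma \ref{Lem:GHY_and_Std_invariants}, checks the global compatibility condition (\ref{Eq:GHY_prod_formula}), and constructs the corresponding positive definite rational quadratic space by a constructive Hasse--Minkowski procedure. Steps~(5) and~(6) then build a maximal $\Z$-valued lattice $L$ in each such space and compute $h(L)$ by Kneser neighboring at several primes $p \nmid 2\det(L)$, keeping those with $h(L) = 1$.

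The expected main obstacle is not conceptual but the subtlety and volume of the case analysis at the prime $p = 2$: both when classifying the mass-eligible local GHY types in the regime where the sharper values of Lemma \ref{Lem:lambda_below_one} apply, and when constructing maximal $\Z_2$-adic lattices and transferring them to a global maximal lattice. Additional care is needed in the constructive Hasse--Minkowski step to correctly handle every tuple of local invariants compatible with (\ref{Eq:GHY_prod_formula}), and in Kneser's algorithm to verify that neighboring at sufficiently many auxiliary primes does exhaust the genus of each candidate lattice (so that the reported class number is provably correct). The paper reports that running this enumeration in Sage yields exactly the $115$ forms displayed in \S\ref{Sec:CN1_Tables}, so any independent verification reduces to rerunning or auditing that implementation.
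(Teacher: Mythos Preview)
Your proposal is correct and follows exactly the approach of the paper: the theorem is stated immediately after the phrase ``By carrying out these computations, we have that'', so the paper's proof is precisely the execution of the six-step algorithm with $B=1$, which is what you describe. Your additional remark about bounding $n$ first via the superexponential growth of the Bernoulli-number zeta product is implicit in the paper's framework (the algorithm is described for fixed $n$, and only finitely many $n$ survive the mass bound) but is a sensible thing to make explicit.
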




\begin{rem}[Related Results and Generalizations]
An exact mass formula for maximal quadratic lattices was given by Shimura in \cite{Shimura:1999ad} by computing residues of certain Eisenstein series and later by the author  \cite{Hanke:1999aa, Hanke:2005mr} using Shimura's local computations via the Tamagawa number formalism.  Shimura's formula was later generalized and simplified in \cite{GHY} by using motivic results of Gross and Bruhat-Tits theory.  Since the results of \cite{GHY} are stated for (totally) definite maximal quadratic and hermitian lattices over number fields, the present enumerative results could also be generalized to that context as well.
\end{rem}

\begin{rem}[Remarks on Ternary forms]
In \cite{Shimura:2006ac} Shimura has computed that there are 64 classes of primitive maximal ternary quadratic forms based on the correspondence with quaternion orders (e.g. see \cite[\textsection 14]{Eichler:1974ta} or \cite{Voigt:ys}), where the quaternion orders of small type number have already been enumerated by Pizer \cite{Pizer:1973vn} using Selberg's trace formula.  
\end{rem}

In the ternary case, one can use a simple divisibility argument to bound the primes $p$ dividing the (Hessian) determinants of forms with class number one. 

\begin{thm} \label{Thrm:class_number_one_by_divisiblilty}
If a prime $p$ divides the Hessian determinant $\det_H(Q)$ of a primitive maximal $\Z$-valued positive definite quadratic form in 3 variables with class number $h(Q) = 1$, then $p \leq 23$.
\end{thm}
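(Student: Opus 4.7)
The plan is to derive the bound $p \leq 23$ from the exact mass formula at $n = 3$ combined with the explicit table of $\lambda_{\text{odd},p}$, via a short divisibility analysis of the local factor at $p$.

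Since $h(Q) = 1$ and $\{\pm I\} \subseteq \Aut(Q)$, we have $\Mass^+(Q) = 2/|\Aut(Q)| \leq 1$. For $n = 3$ the mass formula (\ref{Eq:GHY_mass_formula}) together with $|\zeta(-1)| = 1/12$ reduces to $\Mass^+(Q) = \tfrac{1}{12}\prod_p \lambda_{\text{odd},p}$, so $\prod_p \lambda_p \leq 12$. The conclusion is trivial when $p = 2$, so assume $p$ is odd. By Lemma \ref{Lem:GHY_and_Std_invariants}, the hypothesis $p \mid \det_H(Q)$ is equivalent (for odd $p$) to $\ord_p(\delta) = 1$, which places the local GHY Mass-type at $p$ in $\{\text{II}+, \text{II}-\}$ and yields $\lambda_p = (p \pm 1)/2$.

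The main step will be to show that $\prod_{q \neq p}\lambda_q \geq 1$; combined with $\prod_p \lambda_p \leq 12$ this forces $\lambda_p \leq 12$, and hence $(p \pm 1)/2 \leq 12$, giving $p \leq 23$ (since $25$ is composite). By Lemma \ref{Lem:lambda_below_one}, $\lambda_q \geq 1$ for every odd $q$ and $\lambda_2 \geq 1/2$, so this inequality is immediate whenever $\lambda_2 \geq 1$. When $\lambda_2 = 1/2$ we have $w_2 = -1$, and the product formula (\ref{Eq:GHY_prod_formula})---which for $n = 3$ reads $\prod_r w_r = 1$---forces some other prime $r$ to have $w_r = -1$; the corresponding $\lambda_r = (r - 1)/2$ is $\geq 2$ as soon as $r \geq 5$, which restores $\prod_{q\neq p}\lambda_q \geq 1$.

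The main obstacle lies in the borderline configurations where $\lambda_2 = 1/2$ and the compensating $w = -1$ prime contributes only $\lambda = 1$ (namely $r = 3$ in type I, or $r = p$ in type II$-$ with $p = 3$). I plan to handle these by a finer divisibility analysis: rewriting $\prod_p \lambda_p = 24/|\Aut(Q)|$ and using that $|\Aut(Q)|$ is an even divisor of $48$, the equation collapses to $\lambda_p = 48/|\Aut(Q)|$ and hence $(p \pm 1) \mid 96$, which together with the mass bound and the parity of $|\Aut(Q)|$ leaves $p = 47$ as the only candidate with $p > 23$. Ruling this out---either by a direct computation of the automorphism group of the unique maximal lattice on the corresponding quadratic space, or by a structural observation showing $|\Aut(Q)| \geq 4$ in this configuration---tightens the divisibility to $(p+1) \mid 24$ and finally yields $p \leq 23$.
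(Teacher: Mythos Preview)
Your argument is not complete. You explicitly leave the ``borderline configurations'' (where $\lambda_2=\tfrac12$ and the compensating prime with $w=-1$ is $r=3$, or is $p$ itself) to an unexecuted ``direct computation'' or ``structural observation'', and you concede that $p=47$ survives your divisibility sieve $(p\pm 1)\mid 96$. Until that case is actually eliminated, you have proved only $p\le 47$, not $p\le 23$. There is also a smaller slip earlier: for odd $p$ the condition $p\mid\det_H(Q)$ is \emph{not} equivalent to $\ord_p(\delta)=1$; GHY Mass-type~I ($w=-1$, $\ord_p(\delta)$ even) also forces $p\mid\det_H(Q)$, so your reduction to types II$\pm$ is too quick, though the formula $\lambda_p=(p\pm1)/2$ still holds in that case for $n=3$.

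The paper's proof takes a shorter route that avoids your case analysis entirely. Rather than using the inequality $\Mass^+(Q)\le 1$ and then invoking the product formula $\prod_p w_p=1$ to manufacture a compensating large factor, the paper exploits the \emph{exact} identity $\Mass^+(Q)=1/|\Aut^+(Q)|\in 1/\N$: thus $\tfrac{1}{12}\prod_p\lambda_p$ is a unit fraction. Since every odd-prime factor $\lambda_q=(q\pm1)/2$ is a positive integer, one isolates the factors at the given odd $p$ and at $2$ to obtain the divisibility $\tfrac{p\pm1}{2}\cdot\lambda_2\mid 12$. The paper then asserts $\lambda_2\in\{1,\tfrac32\}$, whence $\tfrac{p\pm1}{2}$ divides $12$ or $8$, so $p\le 23$. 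You are right to be wary of the value $\lambda_2=\tfrac12$---it does occur, e.g.\ for $x^2+y^2+z^2$---so that assertion in the paper also deserves a word of justification; but even so, the divisibility framework is the cleaner starting point, and the residual target $\tfrac{p\pm1}{2}\mid 24$ is narrower than what your inequality-based approach leaves open.
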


\begin{proof}
When $n=3$ the exact mass formula of Section \ref{Sec:Mass_formula_for_maximal} takes the form
$$
\Mass^+(Q) 
= |\zeta(-1)| \prod_p \lambda_{\text{odd}, p} 
= \frac{1}{12} \prod_{p\mid \det_H(Q)} \lambda_{p}
$$
where the non-trivial adjustment factors are given by $\lambda_p := \lambda_{\text{odd}, p} = \frac{p \pm 1}{2}$.  Notice that if $p\neq2$ then $\lambda_p \in \N$, and that $\lambda_{2} = 1$ or $\frac32$.

If $h(Q) = 1$ then $\Mass^+(Q)  = \frac{1}{\Aut^+(Q)} \in \frac{1}{\N}$.  This implies that any odd prime $p\mid \det_H(Q)$ must satisfy 
$$
\frac{p \pm 1}{2} \cdot \lambda_2 \mid 12
$$
which implies that $\frac{p \pm 1}{2}$ divides either 8 or 12.  In either case we must have $\frac{p \pm 1}{2} \leq 12$, showing that $p \leq 23$.
\end{proof}

\begin{rem}[Optimality of the Ternary Divisibility bound]
In the tables of class number one maximal positive definite lattices in Section \ref{Sec:CN1_Tables}, we see that $p=23$ does arise as a factor of the Hessian determinant of some ternary quadratic form, so the bound given in Theorem \ref{Thrm:class_number_one_by_divisiblilty} is an optimal bound.
\end{rem}


\newpage

\section{Tables of primitive maximal $\Z$-valued quadratic forms in $n\geq 3$ variables}\label{Sec:CN1_Tables}

\begin{sagesilent}
load("/Users/jonhanke/Dropbox/SAGE/qf_init.sage")
\end{sagesilent}

\begin{sagesilent}

def my_split(s, seps):
    res = [s]
    for sep in seps:
        s, res = res, []
        for seq in s:
            res += seq.split(sep)
    return res

def maximal_QF_table(n, index_start=None, index_stop=None, terms_per_line=None):
    # Make the range of the table
    if index_start == None:
        index_start = 0
    if index_stop == None:
        index_stop = len(CN1_maximal_lattice_list[n]) - 1
    index_list = range(index_start, index_stop +1)

    # start of the table
    s  = [ r"""\begin{tabular}{|c|c|c|c|}""" ]

    # add the heading
    s.append(r" \hline ")
    s.append(r" \text{\#} ")
    s.append(r" & ")
    s.append(r" \text{Maximal CN1 forms in $" + str(n) + r"$ variables} & ")
    s.append(r" \text{det} ")
    s.append(r" & ")
    s.append(r" \text{det factors} ")
    s.append(r" \\ ")
    s.append(r" \hline ")

    ## Setup the polynomial ring
    R1 = PolynomialRing(ZZ, 10, 'abcdefghij')

    ## Add all of the entries    
    for i in index_list:

        ## Setup the lattice
        L = CN1_maximal_lattice_list[n][i]
        Q = L.quadratic_form__integral().lll()
        dim = Q.dim()

        ## Compute the string for the quadratic form
        Q_poly = Q.base_change_to(R1)(R1.gens()[:dim])
        Q_str = str(Q_poly).replace(r"*", r'')

        ## Determine the number of terms per line, and the number of lines
        if terms_per_line == None:
            Q_str_line_list = [Q_str]	
        else:
            num_of_terms = len(my_split(Q_str, ['+', '-']))
            Q_str_line_list = []

            ## Pull out all lines with "terms_per_line" number of terms
            while num_of_terms >= terms_per_line:

	       ## Find the string index in Q_str for the first "terms_per_line" number of terms
                Q_str_index = 0
                tmp_num_of_terms = 0
                while (Q_str_index < len(Q_str)) and (tmp_num_of_terms < terms_per_line):
                    if (Q_str[Q_str_index] in ['+', '-']):
                        tmp_num_of_terms += 1
                    Q_str_index += 1

                ## Deal with finishing the line with the last term
                if (Q_str_index == len(Q_str)):
                    tmp_num_of_terms += 1
                
                ## Pull out this many terms of Q_str, and continue
                Q_str_line_list.append(Q_str[0:Q_str_index])
                Q_str = Q_str[Q_str_index:]
                num_of_terms -=  terms_per_line

            ## Output the remaining few terms as the last line, if there are any left.
            if (num_of_terms > 0):
                Q_str_line_list.append(Q_str)

        ## Add the first line for each entry
        num_of_lines = len(Q_str_line_list)
        s.append(r"\multirow{" + str(num_of_lines) + r"}{*}{")
        s.append(str(i + 1))
        s.append(r"} & ")
        s.append(r" $" + Q_str_line_list[0] + r"$ ")
        s.append(r" & ")
        s.append(r"\multirow{" + str(num_of_lines) + r"}{*}{")
        s.append(r" $" + str(Q.det()) + r"$ ")
        s.append(r"} & ")
        Q_det_factor_str = str(factor(Q.det())).replace(r"*", r' \cdot ')
        s.append(r"\multirow{" + str(num_of_lines) + r"}{*}{")
        s.append(r" $" + str(Q_det_factor_str) + r"$ ")
        s.append(r"} \\ ")
        
        ## Add the remaining lines for each entry		    
        for j in range(1, num_of_lines):
            s.append(r" & ")
            s.append(r" $" + Q_str_line_list[j] + r"$ ")
            s.append(r" & ")
            s.append(r" & ")
            s.append(r" \\ ")
            
        ## Add the separating hline
        if terms_per_line != None:
            s.append(r" \hline ")

    ## Add the last line and return
    if terms_per_line == None:
        s.append(r" \hline ")
    s.append(r"\end{tabular} ")

    return ''.join(s)

\end{sagesilent}

\begin{center}
$
\sage{maximal_QF_table(3, 0, 44)}
$
\end{center}

\newpage

\begin{center}
$
\sage{maximal_QF_table(3, 45)}
$
\end{center}

\begin{center}
$
\sage{maximal_QF_table(4)}
$
\end{center}


\begin{center}
$
\sage{maximal_QF_table(5, terms_per_line=10)}
$
\end{center}

\medskip

\begin{center}
$
\sage{maximal_QF_table(6, terms_per_line=9)}
$
\end{center}

\medskip

\begin{center}
$
\sage{maximal_QF_table(7, terms_per_line=8)}
$
\end{center}

\medskip

\begin{center}
$
\sage{maximal_QF_table(8, terms_per_line=8)}
$
\end{center}

\medskip

\begin{center}
$
\sage{maximal_QF_table(9, terms_per_line=8)}
$
\end{center}

\medskip

\begin{center}
$
\sage{maximal_QF_table(10, terms_per_line=8)}
$
\end{center}

\bigskip
\hrule
\bigskip

\begin{center}
  \begin{tabular}{ | c || c | c | c || c | c | c | c | c | c | c | c | }
  \hline
    \multicolumn{12}{|c|}{\# of classes of primitive maximal $\Z$-valued} \\ 
    \multicolumn{12}{|c|}{positive definite quadratic forms $Q$ with $h(Q) = 1$} \\ \hline \hline 
    rank $n$ & 0 & 1 & 2 & 3 & 4 & 5 & 6 & 7 & 8 & 9 & 10 \\ \hline
    \# & 1 & 1 & ?? & 64 & 20 & 12 & 10 & 5 & 2 & 1 & 1 \\ \hline
  \end{tabular}
\end{center}

\newpage

%


%


\bibliographystyle{alpha}	
\bibliography{maximal_CN1_refs}		


\end{document}